\documentclass[11pt]{article}
\usepackage[T1]{fontenc}
\usepackage{lmodern}
\usepackage{amsmath,amssymb,amsthm}
\usepackage[margin=1.1in]{geometry}
\usepackage{microtype}
\usepackage{booktabs}
\usepackage{needspace}
\usepackage[authoryear,round]{natbib}
\usepackage[colorlinks=true,citecolor=blue,linkcolor=blue,urlcolor=blue]{hyperref}
\hypersetup{pdftitle={A remoteness formula for generalized Wythoff games},
  pdfauthor={Raul Sanchez Galan and Yijie Xu}}
\newtheorem{theorem}{Theorem}[section]
\newtheorem{proposition}[theorem]{Proposition}
\newtheorem{lemma}[theorem]{Lemma}
\newtheorem{corollary}[theorem]{Corollary}
\theoremstyle{definition}
\newtheorem{definition}[theorem]{Definition}
\newtheorem{remark}[theorem]{Remark}
\newtheorem{example}[theorem]{Example}
\newcommand{\R}{\mathcal R}
\newcommand{\Nzero}{\mathbb Z_{\geq 0}}
\newcommand{\WYT}{\mathrm{WYT}}
\newcommand{\mexb}{\operatorname{mex}_b}
\newcommand{\idx}{\iota_{a,b}}
\newcommand{\eps}{\varepsilon}
\DeclareMathOperator{\Var}{Var}

\begin{document}
\title{A remoteness formula for generalized Wythoff games}
\author{Ra\'ul S\'anchez Gal\'an%
\thanks{4i Intelligent Insights, Tecnoincubadora Marie Curie,
PCT Cartuja, 41092 Sevilla, Spain. }
\and
Yijie Xu%
\thanks{The Hong Kong University of Science and Technology (Guangzhou),
Guangzhou, China.}}

\maketitle

\begin{abstract}
Smith's remoteness function records how long an impartial game lasts when
the winner wins as quickly as possible and the loser delays defeat. For the
generalized Wythoff game $\WYT(a,b)$, in which two positive integer
parameters $a$ and $b$ determine which simultaneous removals from the two
piles are allowed, we give a formula for the remoteness of every position,
building on the known values at losing positions. The formula depends on
two indices: a local index determined by the coordinates of the losing
positions in a window of length $b$ ending at the smaller pile, and the
integer part of the pile difference divided by $a$. When $b=1$,
complementary Beatty sequences make the local index explicit, which gives
an explicit formula for $\WYT(a,1)$, including classical Wythoff's game.
We also show that if each attempted move in a finite impartial game
independently fails with a fixed probability and passes the turn, the
probability that the player to move wins under optimal play is an explicit
exponential function of remoteness. For every success probability strictly
between $0$ and $1$, the optimal attempts are exactly the moves that
reduce remoteness by one.
\end{abstract}

\noindent\textbf{Keywords:} Wythoff's game; remoteness; minimum excludant;
Beatty sequence; stochastic game.

\smallskip
\noindent\textbf{Mathematics Subject Classification (2020):}
Primary 91A46; Secondary 11B83, 91A50.

\section{Introduction}

In Wythoff's game, a player removes a positive number of tokens from one
of two piles, or the same positive number from both. The player with no
legal move loses. \citet{Wythoff1907} showed that the losing
positions, with the smaller pile first, are
\[
 (\lfloor m\phi\rfloor,\lfloor m\phi^2\rfloor),\qquad
 m\in\Nzero,\qquad \phi=\frac{1+\sqrt5}{2}.
\]
This determines who can win, but it does not distinguish between winning
moves that end the game at different times.

Smith's remoteness function $\R$ \citep{Smith1966} makes this distinction.
At a winning position its value is the number of moves needed to force
a win against an opponent who delays defeat; at a losing position it is
the number of moves for which defeat can be delayed against an opponent
who wins as quickly as possible. Its parity gives the usual classification
into P-positions, which are losing for the player to move, and
N-positions, which are winning. A move that decreases the remoteness by
exactly one is called \emph{remoteness-optimal}: from an N-position it
forces the fastest win, and from a P-position it makes the game last as
long as possible. Smith credited the underlying idea to
\citet{Steinhaus1925} and called the function the Steinhaus remoteness function. It governs conjunctive compounds, in which a player moves in every component and play stops when some component ends, in the way that
the Sprague--Grundy function governs disjunctive sums
\citep{Smith1966,Conway1976,WinningWays}.

Gurvich's two-parameter family $\WYT(a,b)$ \citep{Gurvich2012}, with
positive integers $a$ and $b$, generalizes Wythoff's game as follows. A
move removes $e$ tokens from the first pile and $f$ from the second, not
both zero, and is allowed when $|e-f|<a$ or $\min(e,f)<b$. In words, a
player may remove tokens from one pile, or from both piles provided that
the two amounts differ by less than $a$ or the smaller amount is less
than $b$. For $a=b=1$ this is Wythoff's game.

\citet{BGMV2024} studied remoteness for several impartial games; related
recent work treats variants of $k$-NIM \citep{GMMV2023,GMMN2024}.    For $\WYT(a,b)$, let $(x_0,y_0)=(0,0),(x_1,y_1),\dots$ be the
P-positions with $x\leq y$, ordered by increasing smaller pile. \citet{BGMV2024} proved that
$(x_m,y_m)$ has remoteness $2m$. In the same paper they gave a
polynomial algorithm for N-positions: it identifies at most six
P-positions reachable in one move and takes the one of smallest
remoteness. They state explicitly that they have no closed formula for
the remoteness of N-positions.

The main result here reduces that selection to the minimum of two
indices: a local index associated with the smaller pile, and the
integer part of the pile difference divided by $a$
(Theorem~\ref{thm:main}). The local index is defined by a window of
length $b$ in the P-position coordinates. A direct consequence of the
minimum-excludant recursion shows that this window always contains a
coordinate. The same observation proves that one of the two candidate
indices is reachable and that no smaller index can be reached.

The contribution concerns the remoteness of N-positions; the even
values are the result of \citet{BGMV2024}. For $b>1$, our formula still
uses the P-position sequences and does not supply an elementary closed
form for those sequences. For $b=1$, Fraenkel's formulas
\citep{Fraenkel1982} make both indices explicit, including the
classical Wythoff case.

We also give a probabilistic interpretation of remoteness. Suppose each
attempted move succeeds independently with probability $p$, and a
failed attempt leaves the position unchanged and passes the turn. Under
optimal play, the probability that the player to move wins is an
exponential function of the deterministic remoteness
(Theorem~\ref{thm:random}); this holds for every finite impartial
normal-play game. In particular, this probability exceeds $1/2$ exactly
at the N-positions, and for every $p\in(0,1)$ the optimal attempts are
exactly the remoteness-optimal moves. Applied to $\WYT(a,b)$,
Theorems~\ref{thm:main} and~\ref{thm:random} together give the optimal
winning probability in terms of P-position coordinates
(equation~\eqref{eq:random-wyt}).

Several other randomizations of combinatorial games use different
rules. \citet{CP2020} study transmission errors that can replace an
intended move by another legal move. \citet{DT2025}
study Chomp and Nim when both players choose legal moves uniformly
at random. In the random-turn games of \citet{PSSW2007}, a coin toss
decides which player moves next. Here the players choose their
attempted moves optimally; the only randomness is whether an attempt
succeeds, and a failure passes the turn without changing the position.
Our value formula depends on this specific failure rule and on a
common success probability for every attempted move.

\section{Remoteness and generalized Wythoff games}\label{sec:prelim}

A finite impartial normal-play game is represented by a finite acyclic
directed graph. Its vertices are positions and its edges are legal
moves. We write $s\to t$ when $t$ is an option of $s$. A terminal
position has no options, and the player to move there loses. A
P-position is losing for the player to move; an N-position is winning.
Thus every option of a P-position is an N-position, and every
N-position has a P-position option. The arguments also apply when the
whole graph is infinite but the subgraph reachable from each initial
position is finite.

\begin{definition}\label{def:remote}
The \emph{remoteness function} is the function $\R$ on positions given
by $\R(s)=0$ at terminal positions and, at every other position, by
\begin{equation}\label{eq:remote}
 \R(s)=
 \begin{cases}
  1+\min\{\R(t):s\to t,\ \R(t)\text{ even}\},
     &\text{if an even option exists},\\[2pt]
  1+\max\{\R(t):s\to t\},&\text{otherwise}.
 \end{cases}
\end{equation}
\end{definition}

This is Smith's function \citep{Smith1966}; see also
\citet[vol.~2, Chap.~9]{WinningWays}. Induction gives that $\R$
is even precisely at P-positions. A \emph{remoteness-optimal} move is
one satisfying $\R(t)=\R(s)-1$. Such a move always exists at a
nonterminal position. At N-positions it gives a fastest forced win;
at P-positions it gives the longest resistance.

\begin{definition}\label{def:wyt}
For integers $a,b\geq1$, the \emph{generalized Wythoff game} $\WYT(a,b)$
is the impartial normal-play game whose positions are the pairs
$(x,y)\in\Nzero^2$, and in which a move replaces $(x,y)$ by $(x-e,y-f)$,
where $0\leq e\leq x$, $0\leq f\leq y$, $e+f>0$, and
\begin{equation}\label{eq:legal}
 |e-f|<a\qquad\text{or}\qquad \min(e,f)<b.
\end{equation}
\end{definition}

The case $(a,b)=(1,1)$ is classical Wythoff. Every move
decreases $x+y$, and $(0,0)$ is the only terminal position. The rules
are symmetric in the piles, so $\R(x,y)=\R(y,x)$. We therefore usually
take $x\leq y$, but keep track of both orientations when describing a
move.

For a finite set $S\subset\Nzero$, define
\begin{equation}\label{eq:mex}
 \mexb(S)=\min\{t\in\Nzero:S\cap(t-b,t]=\varnothing\}.
\end{equation}
Equivalently, it is the least nonnegative integer not covered by the
integer intervals $[u,u+b-1]$, $u\in S$. For $b=1$ this is the
usual minimum excludant. If $0\in S$ and its distinct elements are
$s_1<\cdots<s_\ell$, then \eqref{eq:mex} is $s_i+b$ at the first gap
$s_{i+1}-s_i>b$, with $s_{\ell+1}=+\infty$; thus it agrees with
the generalized minimum excludant in \citet{Gurvich2012}.
Directly from \eqref{eq:mex}, $\mexb$ is monotone: if $S\subseteq S'$,
then every $t<\mexb(S)$ is covered by $S$, hence by $S'$, so
$\mexb(S')\geq\mexb(S)$.

Let $(x_m,y_m)$, $m\geq0$, denote the P-positions with $x_m\leq y_m$
and increasing $x_m$. Gurvich's characterization is
\begin{equation}\label{eq:pairs}
 x_m=\mexb\{x_i,y_i:0\leq i<m\},\qquad y_m=x_m+am.
\end{equation}
In particular $(x_0,y_0)=(0,0)$. \citet{Gurvich2012} proved the
spacing bounds, recalled also by \citet{BGMV2024},
\begin{equation}\label{eq:spacing}
 b\leq x_{m+1}-x_m\leq2b,\qquad
 a+b\leq y_{m+1}-y_m\leq a+2b.
\end{equation}
Only the lower bounds are used below, and they follow at once from
\eqref{eq:pairs}: by monotonicity $x_{m+1}\geq x_m$, and $x_{m+1}$ is
not covered by $[x_m,x_m+b-1]$, so $x_{m+1}\geq x_m+b$; then
$y_{m+1}-y_m=x_{m+1}-x_m+a\geq a+b$. Hence both coordinate sequences
are strictly increasing and unbounded.
For an ordered position, writing $d=y-x$, we have
\begin{equation}\label{eq:p-test}
 (x,y)\text{ is a P-position}
 \quad\Longleftrightarrow\quad a\mid d\ \text{ and }\ x=x_{d/a}.
\end{equation}
We use the following remoteness values proved by \citet{BGMV2024}:
\begin{equation}\label{eq:p-remote}
 \R(x_m,y_m)=\R(y_m,x_m)=2m\qquad(m\geq0).
\end{equation}

\section{The remoteness formula}\label{sec:formula}

This section proves the main result. The formula involves a local index
attached to the smaller pile $x$. It records the first P-position, in the
order $(x_0,y_0),(x_1,y_1),\dots$, that has a coordinate in the window of
length $b$ ending at $x$.

\begin{definition}\label{def:index}
The \emph{local index} is the function $\idx$ on $\Nzero$ given by
\begin{equation}\label{eq:index}
 \idx(x)=\min\{m\geq0:x_m\in(x-b,x]\ \text{or}\ y_m\in(x-b,x]\}.
\end{equation}
\end{definition}

The next lemma shows that this set is never empty, so $\idx(x)$ is well
defined. 

\begin{lemma}\label{lem:cover}
For every $k\geq0$:
\begin{enumerate}
\item If $0\leq x<x_k$, some coordinate $x_i$ or $y_i$, with $i<k$,
belongs to $(x-b,x]$.
\item No coordinate of index less than $k$ belongs to $(x_k-b,x_k]$.
\end{enumerate}
Consequently $\idx(x)$ exists for every $x\in\Nzero$, $\idx(x_k)=k$ for
every $k\geq0$, and, for all $x,q\in\Nzero$,
\begin{equation}\label{eq:cover-consequence}
 \idx(x)>q\quad\Longrightarrow\quad x\geq x_{q+1}>x_q.
\end{equation}
\end{lemma}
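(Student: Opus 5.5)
Both parts come straight from the recursion \eqref{eq:pairs} together with the definition \eqref{eq:mex}; the consequences then follow formally. Let $S_k=\{x_i,y_i:0\le i<k\}$, so that $x_k=\mexb(S_k)$. For $k=0$ the set $S_0$ is empty and $x_0=\mexb(\varnothing)=0$, so part 1 is vacuous (no $x$ satisfies $0\le x<x_0$) and part 2 is vacuous (there are no indices less than $0$). Assume $k\ge1$ below.

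For part 1, take $0\le x<x_k$. By the definition of $\mexb$ as the least $t$ with $S_k\cap(t-b,t]=\varnothing$, every $t<x_k$ satisfies $S_k\cap(t-b,t]\ne\varnothing$. Applying this with $t=x$ gives an element $x_i$ or $y_i$ of $S_k$, with $i<k$, lying in $(x-b,x]$. For part 2, the minimum defining $\mexb(S_k)$ is attained at $t=x_k$, so $S_k\cap(x_k-b,x_k]=\varnothing$. In other words, no coordinate of index $<k$ lies in that window.

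Next, existence of $\idx(x)$. The sequence $x_m$ is strictly increasing and unbounded by the lower spacing bound in \eqref{eq:spacing}. Given $x$, choose $k$ with $x_k>x$; part 1 then provides an index $i<k$ in the set of \eqref{eq:index}, so the set is nonempty. For $\idx(x_k)=k$: the coordinate $x_k$ itself lies in $(x_k-b,x_k]$, so $\idx(x_k)\le k$, and part 2 excludes every index below $k$.

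Finally, for \eqref{eq:cover-consequence}, argue by contrapositive. If $x<x_{q+1}$, part 1 with $k=q+1$ gives an index $i\le q$ whose coordinate lies in the window, so $\idx(x)\le q$. Hence $\idx(x)>q$ forces $x\ge x_{q+1}$, and $x_{q+1}>x_q$ by strict monotonicity. None of these steps is a real obstacle. The only point needing care is the boundary case $k=0$ and confirming that the sequence is unbounded, which is exactly what the lower spacing bounds \eqref{eq:spacing} supply.
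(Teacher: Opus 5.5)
Your proposal is correct and follows the paper's proof essentially step for step. Both parts are read off from $x_k$ being the least uncovered integer for $S=\{x_i,y_i:i<k\}$, existence and $\idx(x_k)=k$ follow from the two parts, and \eqref{eq:cover-consequence} comes from the same contrapositive with $k=q+1$; your explicit handling of $k=0$ and of unboundedness via the lower spacing bounds only spells out what the paper leaves implicit.
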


\begin{proof}
Both assertions follow from $x_k$ being the least uncovered integer in
\eqref{eq:mex}, with $S=\{x_i,y_i:i<k\}$. Given $x$, choose $k$ with
$x<x_k$ and apply the first assertion to prove existence. At $x_k$, the
second assertion excludes smaller indices, while the coordinate $x_k$
itself gives index $k$. Finally, if $x<x_{q+1}$, the first assertion
provides a coordinate of index at most $q$ in $(x-b,x]$. Taking the
contrapositive proves \eqref{eq:cover-consequence}.
\end{proof}

For an N-position $(x,y)$, let
\[
 M(x,y)=\{m:\ (x_m,y_m)\text{ or }(y_m,x_m)
                    \text{ is reachable in one move}\}.
\]
This set is nonempty. Equations \eqref{eq:remote} and
\eqref{eq:p-remote} give
\begin{equation}\label{eq:reduction}
 \R(x,y)=1+2\min M(x,y).
\end{equation}

\begin{lemma}\label{lem:least-index}
If $0\leq x\leq y$ is an N-position, then
\begin{equation}\label{eq:least-index}
 \min M(x,y)=\min\!\left\{\idx(x),
                    \left\lfloor\frac{y-x}{a}\right\rfloor\right\}.
\end{equation}
\end{lemma}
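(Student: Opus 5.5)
Write $d=y-x\geq0$ and $q=\lfloor d/a\rfloor$. The plan is to prove two things: every index in $M(x,y)$ is at least $\min\{\idx(x),q\}$, and this minimum is itself attained. Throughout, a move to a P-position $(u,v)$, in either orientation, is written $e=x-u$, $f=y-v$. Its legality is checked with \eqref{eq:legal}.

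\emph{Attainment.} I would split according to whether $\idx(x)\leq q$.

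If $i=\idx(x)\leq q$, pick a coordinate $c\in\{x_i,y_i\}$ with $c\in(x-b,x]$, and let $c'$ be the other coordinate of the $i$-th P-position. Move the first pile to $c$ and the second pile to $c'$. Then $e=x-c<b$, so the move is legal by the clause $\min(e,f)<b$. It remains to check $f=y-c'\geq0$. We have $c'\leq c+ai\leq x+aq\leq x+d=y$. Also $e+f>0$ holds because $(x,y)$ is an N-position.

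If $\idx(x)>q$, then \eqref{eq:cover-consequence} gives $x\geq x_{q+1}>x_q$. Move to $(x_q,y_q)$ in the same orientation. Then $e=x-x_q\geq0$ and $f=y-y_q=e+d-aq\geq e\geq0$. Moreover $e-f=aq-d\in(-a,0]$, so $|e-f|<a$ and the move is legal.

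In both cases an index equal to $\min\{\idx(x),q\}$ lies in $M(x,y)$.

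\emph{Lower bound.} Suppose some $m\in M(x,y)$ satisfies $m<\idx(x)$ and $m<q$; I aim for a contradiction. Since $\idx(x)>m$, \eqref{eq:cover-consequence} gives $x\geq x_{m+1}\geq x_m+b$, so $x_m\leq x-b$. By the definition of $\idx$, the coordinate $y_m$ is not in $(x-b,x]$, so either $y_m\leq x-b$ or $y_m>x$. Also $am\leq a(q-1)\leq d-a$.

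\emph{Case $y_m\leq x-b$.} Both target coordinates are at most $x-b$, so $e\geq b$ and $f\geq y-(x-b)\geq b$. The move must therefore use $|e-f|<a$. But $|e-f|=|d\mp am|\geq d-am\geq a$ in either orientation, which is a contradiction.

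\emph{Case $y_m>x$.} The first pile cannot become $y_m$, so the target is $(x_m,y_m)$ in the same orientation. Then $e=x-x_m\geq b$ and $e-f=am-d\leq -a$. So the move needs $f<b$. However $f=y-y_m=x-x_m+d-am\geq b+a$, which is again a contradiction.

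Combining attainment with this bound gives \eqref{eq:least-index}.

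The main obstacle I expect is the lower bound. There one must see that $m<\idx(x)$ controls not only $x_m$, via \eqref{eq:cover-consequence}, but also the position of $y_m$ relative to the window $(x-b,x]$. The case split on $y_m$ is what makes both legality clauses fail at once. Attainment is essentially Lemma~\ref{lem:cover} together with the one-line check $c'\leq c+ai$.
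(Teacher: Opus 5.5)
Your proof is correct and follows the paper's argument. The attainment step is the same two-case construction; you merge the paper's two subcases for $\idx(x)\leq q$ using the notation $c,c'$. The lower bound rests on the same observation: an index $m<q$ forces $|e-f|\geq a$, so legality would need $\min(e,f)<b$, which would place a coordinate of index $m$ in $(x-b,x]$.

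The paper organizes the lower bound by the orientation of the target rather than by the position of $y_m$. It notes that $f-e\geq a$ in either orientation gives $\min(e,f)=e$ directly. Your detour through \eqref{eq:cover-consequence} and the spacing bound to get $x_m\leq x-b$ is therefore not needed.
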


\begin{proof}
Write $d=y-x=aq+r$, where $q\geq0$ and $0\leq r<a$, and set
$j=\idx(x)$.

We first exclude every index $m<\min(j,q)$. Consider a move from $(x,y)$
to the ordered P-position $(x_m,y_m)$, and let $e=x-x_m$ and $f=y-y_m$ be
the numbers of tokens removed from the first and second pile. Then
\[
 f-e=d-am\geq a.
\]
Such a move cannot satisfy $|e-f|<a$. Its legality would therefore
require $e=\min(e,f)<b$. This puts $x_m=x-e$ in $(x-b,x]$, contradicting
$m<j$.
For a move to the reversed position $(y_m,x_m)$ we have $e=x-y_m$ and
$f=y-x_m$, so instead
\[
 f-e=d+am\geq a,
\]
because $m<q$ implies $q\geq1$. Legality again requires $e<b$,
which puts $y_m$ in $(x-b,x]$ and gives the same contradiction.
Thus $\min M(x,y)\geq\min(j,q)$.

We now construct a move to index $\min(j,q)$.

\emph{Case $j\leq q$.} By the definition of $j$, at least one of $x_j$
and $y_j$ lies in $(x-b,x]$. If $y_j\in(x-b,x]$, move to $(y_j,x_j)$.
Both coordinates fit, since $x_j\leq y_j\leq x\leq y$, and the reduction
of the first pile is less than $b$. Otherwise $x_j\in(x-b,x]$, and we
move to $(x_j,y_j)$. Indeed,
\[
 y_j=x_j+aj\leq x+aq\leq y,
\]
and again the first-pile reduction is less than $b$. Either move is
nonzero because the starting position is an N-position.

\emph{Case $j>q$.} Lemma~\ref{lem:cover} gives $x>x_q$. Move to
$(x_q,y_q)$. The reductions are
\[
 e=x-x_q>0,\qquad
 f=y-y_q=e+r\geq0,
\]
so $|e-f|=r<a$. This is a legal move of index $q$.

In both cases $\min(j,q)\in M(x,y)$, proving the reverse inequality.
\end{proof}

\begin{theorem}\label{thm:main}
For every $a,b\geq1$ and $0\leq x\leq y$,
\begin{equation}\label{eq:main}
 \boxed{\displaystyle
 \R(x,y)=2\min\!\left\{\idx(x),
                  \left\lfloor\frac{y-x}{a}\right\rfloor\right\}
                  +\eps(x,y),}
\end{equation}
where $\eps(x,y)=0$ at P-positions and $\eps(x,y)=1$ at N-positions.
By symmetry, $\R(x,y)=\R(y,x)$ gives the value when $x>y$.
\end{theorem}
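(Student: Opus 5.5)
The plan is to split according to whether $(x,y)$ (with $x\leq y$) is a P-position or an N-position, and to reduce each case to a result already stated.

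\emph{N-positions.} Here $\eps(x,y)=1$. Equation~\eqref{eq:reduction} gives $\R(x,y)=1+2\min M(x,y)$, and Lemma~\ref{lem:least-index} identifies $\min M(x,y)$ with $\min\{\idx(x),\lfloor (y-x)/a\rfloor\}$. Substituting yields \eqref{eq:main} directly.

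\emph{P-positions.} By \eqref{eq:p-test} we have $y-x=am$ and $x=x_m$ for some $m$. Hence $\lfloor (y-x)/a\rfloor=m$, and $\idx(x)=\idx(x_m)=m$ by Lemma~\ref{lem:cover}. The minimum is therefore $m$. Since $\eps=0$, the right-hand side equals $2m$, which is $\R(x_m,y_m)$ by \eqref{eq:p-remote}.

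\emph{Remaining points.} The case $x>y$ follows from the symmetry $\R(x,y)=\R(y,x)$ noted after Definition~\ref{def:wyt}. The terminal position $(0,0)$ is the P-position with $m=0$, so it needs no separate treatment. There is no real obstacle, since the substantive work is already done in Lemma~\ref{lem:least-index}. The only step to check carefully is that the P-position case really gives $\idx(x_m)=m$, and this is exactly the consequence $\idx(x_k)=k$ stated in Lemma~\ref{lem:cover}.
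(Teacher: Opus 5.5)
Your proposal is correct and follows the paper's proof essentially verbatim: at N-positions you combine \eqref{eq:reduction} with Lemma~\ref{lem:least-index}, and at P-positions you use $\idx(x_m)=m$ from Lemma~\ref{lem:cover} together with $(y_m-x_m)/a=m$ and \eqref{eq:p-remote}. Your handling of the terminal position as the case $m=0$ and of $x>y$ by symmetry also matches the paper.
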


\begin{proof}
At an N-position use \eqref{eq:reduction} and
Lemma~\ref{lem:least-index}. At a P-position $(x_m,y_m)$,
Lemma~\ref{lem:cover} gives $\idx(x_m)=m$, and
$(y_m-x_m)/a=m$. Thus the right-hand side is $2m$, as required by
\eqref{eq:p-remote}. This includes the terminal position.
\end{proof}

\begin{corollary}[Optimal moves]\label{cor:moves}
At an N-position, the constructions in Lemma~\ref{lem:least-index}
give a remoteness-optimal move. At a nonterminal P-position
$(x_m,y_m)$, reducing the larger pile by one is remoteness-optimal.
\end{corollary}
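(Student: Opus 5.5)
The corollary has two parts, and I treat them separately.

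\emph{N-positions.} Let $(x,y)$ be an N-position with $x\leq y$. By Lemma~\ref{lem:least-index}, the constructed move reaches a P-position of index $\min M(x,y)$. Its remoteness is $2\min M(x,y)$ by \eqref{eq:p-remote}. By \eqref{eq:reduction} this equals $\R(x,y)-1$, so the move is remoteness-optimal. If $x>y$, I reflect via the symmetry of the rules, since the reflected move is also legal.

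\emph{P-positions.} Let $(x_m,y_m)$ with $m\geq1$, and move to $(x_m,y_m-1)$. This move takes $e=0$ and $f=1$, so $\min(e,f)=0<b$ and it is legal. The target is an N-position, since every option of a P-position is. I then need its remoteness to equal $2m-1$.

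Set $x'=\min(x_m,y_m-1)$ and $y'=\max(x_m,y_m-1)$. Since $m\geq1$, we have $y_m-x_m=am\geq1$, so $y_m-1\geq x_m$. Hence $x'=x_m$ and $y'=y_m-1$. Theorem~\ref{thm:main} gives
\[
\R=2\min\{\idx(x_m),\lfloor (am-1)/a\rfloor\}+1 .
\]
By Lemma~\ref{lem:cover}, $\idx(x_m)=m$. Also $\lfloor(am-1)/a\rfloor=m-1$, because $a\geq1$. So the minimum is $m-1$ and $\R=2m-1=\R(x_m,y_m)-1$.

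The reversed orientation $(y_m,x_m)$ follows by symmetry. The only delicate point is the ordering check $y_m-1\geq x_m$, which uses $m\geq1$ (nonterminal) and $a\geq1$. No step is a real obstacle: this is a direct evaluation of Theorem~\ref{thm:main} at one explicit position.
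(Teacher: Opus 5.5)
Your proposal is correct and matches the paper's proof: the N-position case follows from \eqref{eq:reduction} and \eqref{eq:p-remote}, and the P-position case evaluates Theorem~\ref{thm:main} at $(x_m,y_m-1)$ using $\idx(x_m)=m$ and $\lfloor(am-1)/a\rfloor=m-1$. The only difference is how you show that $(x_m,y_m-1)$ is an N-position: you use the general fact that every option of a P-position is an N-position, whereas the paper checks it with \eqref{eq:p-test}. Your shortcut is valid and slightly simpler.
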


\begin{proof}
The N-position assertion follows from \eqref{eq:reduction}.
The second assertion also appears in the proof of the P-position
remoteness formula in \citet{BGMV2024}; we recover it here from
Theorem~\ref{thm:main}. We have $m\geq1$, and the legal move leads to
$(x_m,y_m-1)$, still in increasing order since $am\geq1$. This is an
N-position: by \eqref{eq:p-test}, it could only be a P-position if
$a\mid am-1$, that is, $a=1$, and then it would equal
$(x_{m-1},y_{m-1})$, which is impossible because $x_{m-1}<x_m$.
Moreover $\idx(x_m)=m$, and
\[
 \left\lfloor\frac{y_m-1-x_m}{a}\right\rfloor=m-1.
\]
Theorem~\ref{thm:main} gives remoteness $2m-1$ there.
\end{proof}

\begin{remark}[Evaluating the local index]\label{rem:compute}
By the lower bounds in \eqref{eq:spacing}, each coordinate sequence
contributes at most one element to $(x-b,x]$. Let
\[
 L(x)=\max\{m:x_m\leq x\},\qquad
 U(x)=\max\{m:y_m\leq x\}.
\]
Keep $L(x)$ when $x-x_{L(x)}<b$, and keep $U(x)$ when
$x-y_{U(x)}<b$. Then $\idx(x)$ is the smaller retained index, with
at least one retained by Lemma~\ref{lem:cover}. Thus the formula uses
two inverse-sequence queries at the smaller pile. For fixed $a$ and
$b$, the Perron--Frobenius algorithm of \citet{BGO2013} computes
$x_M$ for given $M$, and the largest $m$ with $x_m\leq X$ or with
$y_m\leq Y$ for given $X$ and $Y$, in time polynomial in the binary
input length; these are exactly the queries needed here, and
\citet{BGMV2024} use the same three queries. The new formula
simplifies the selection of a fastest winning move;
polynomial-time computability itself was already established by
\citet{BGMV2024}.
\end{remark}

\section{Explicit formulas when \texorpdfstring{$b=1$}{b=1}}\label{sec:beatty}

For $a\geq1$, define
\[
 \alpha_a=\frac{2-a+\sqrt{a^2+4}}2,\qquad
 \beta_a=\alpha_a+a.
\]
Fraenkel's formulas \citep{Fraenkel1982} are
\begin{equation}\label{eq:beatty}
 x_m=\lfloor\alpha_a m\rfloor,\qquad
 y_m=\lfloor\beta_a m\rfloor.
\end{equation}
The numbers $\alpha_a,\beta_a>1$ are irrational and satisfy
$\alpha_a^{-1}+\beta_a^{-1}=1$. Hence their positive Beatty
sequences partition the positive integers
\citep{Beatty1926,OHA1927}.
Let $\iota_a(0)=0$ and, for $x>0$, let $\iota_a(x)$ be the unique
index of $x$ in these two sequences. It can be evaluated without
searching the sequences: set $k=\lceil x/\alpha_a\rceil$, and use
\begin{equation}\label{eq:beatty-index}
 \iota_a(x)=
 \begin{cases}
 k,&\lfloor\alpha_a k\rfloor=x,\\
 \lceil x/\beta_a\rceil,&\lfloor\alpha_a k\rfloor\ne x.
 \end{cases}
\end{equation}
Indeed, if $x=\lfloor\gamma m\rfloor$ with $\gamma>1$ irrational,
then $m-1<x/\gamma<m$, so $m=\lceil x/\gamma\rceil$.

\begin{corollary}\label{cor:beatty}
In $\WYT(a,1)$,
\[
 \R(x,y)=2\min\!\left\{\iota_a(x),
                 \left\lfloor\frac{y-x}{a}\right\rfloor\right\}
                 +\eps(x,y),\qquad 0\leq x\leq y.
\]
In particular, for classical Wythoff, $\alpha_1=\phi$,
$\beta_1=\phi^2$, and
\begin{equation}\label{eq:classical}
 \R(x,y)=2\min\{\iota_1(x),y-x\}+\eps(x,y).
\end{equation}
In this case $\eps(x,y)=0$ precisely when
$x=\lfloor\phi(y-x)\rfloor$.
\end{corollary}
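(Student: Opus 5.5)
The plan is to specialize Theorem~\ref{thm:main} to $b=1$ and show that $\idx$ coincides with $\iota_a$. With $b=1$ the window $(x-b,x]$ in \eqref{eq:index} is the single integer $x$. So $\idx(x)$ is the least $m$ with $x_m=x$ or $y_m=x$.

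First take $x=0$. We have $x_0=y_0=0$, so $\idx(0)=0=\iota_a(0)$.

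Next take $x>0$. By \eqref{eq:beatty}, the positive coordinates $x_m=\lfloor\alpha_a m\rfloor$ and $y_m=\lfloor\beta_a m\rfloor$ with $m\geq1$ are exactly the two complementary Beatty sequences. These partition the positive integers, so $x$ occurs exactly once among them, at index $\iota_a(x)$. The index $m=0$ contributes only the value $0\neq x$, so $\idx(x)=\iota_a(x)$. Substituting into \eqref{eq:main} gives the displayed formula for $\WYT(a,1)$. Formula \eqref{eq:beatty-index} is then only needed to compute $\iota_a(x)$, and the paper has already justified it.

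For the classical case, put $a=1$. Then $\alpha_1=(1+\sqrt5)/2=\phi$ and $\beta_1=\phi+1=\phi^2$. Since $\lfloor (y-x)/1\rfloor=y-x$, the formula becomes \eqref{eq:classical}.

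It remains to characterize $\eps$. By \eqref{eq:p-test} with $a=1$, the ordered position $(x,y)$ is a P-position iff $x=x_{y-x}=\lfloor\phi(y-x)\rfloor$. Thus $\eps(x,y)=0$ exactly when $x=\lfloor\phi(y-x)\rfloor$.

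I expect no real obstacle. The only step needing care is to use the partition of the positive integers, not just the uniqueness of the index. Both Beatty sequences start at $m=1$, and the minimum in \eqref{eq:index} must not accidentally pick $m=0$ for $x>0$. Both points are immediate.
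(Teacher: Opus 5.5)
Your proposal is correct and follows the same route as the paper. With $b=1$ the window $(x-b,x]$ is $\{x\}$, so the Beatty partition gives $\iota_{a,1}=\iota_a$ (with $x_0=y_0=0$ covering $x=0$), and Theorem~\ref{thm:main} together with \eqref{eq:p-test} yields both the formula and the description of $\eps$; your explicit check that $m=0$ cannot be selected when $x>0$ is a detail the paper leaves implicit.
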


\begin{proof}
For $b=1$ the window $(x-b,x]$ contains the single integer $x$,
so $\iota_{a,1}(x)=\iota_a(x)$. Apply Theorem~\ref{thm:main}
and \eqref{eq:p-test}.
\end{proof}

The first classical P-positions are
\[
 (0,0),\ (1,2),\ (3,5),\ (4,7),\ (6,10),\ (8,13),\ (9,15).
\]
Table~\ref{tab:examples} illustrates how either term of the minimum
can determine the fastest win. Targets are written in the physical
order of the piles: for instance, the target $(7,4)$ from $(7,12)$
leaves the first pile at $7$ and reduces the second pile to $4$,
reaching the P-position $(4,7)$ with the piles interchanged.

\begin{table}[ht]
\centering
\caption{Classical Wythoff examples and remoteness-optimal moves.
Targets are given in the physical order of the two piles.}
\label{tab:examples}
\begin{tabular}{@{}rrrrl@{}}
\toprule
$(x,y)$ & $\iota_1(x)$ & $y-x$ & $\R(x,y)$ & One optimal target\\
\midrule
$(4,9)$  & 3 & 5 & 7  & $(4,7)$\\
$(7,9)$  & 3 & 2 & 5  & $(3,5)$\\
$(7,12)$ & 3 & 5 & 7  & $(7,4)$\\
$(8,13)$ & 5 & 5 & 10 & $(8,12)$\\
\bottomrule
\end{tabular}
\end{table}

\begin{example}[A window with two indices]
For $\WYT(1,2)$, the first P-positions are
\[
 (0,0),\ (2,3),\ (5,7),\ (9,12),\ (11,15).
\]
The interval $(10,12]$ contains $x_4=11$ and $y_3=12$.
Thus $\iota_{1,2}(12)=3$, and $\R(12,20)=7$, attained by the
move to $(12,9)$.
\end{example}

\section{Independent failed moves}\label{sec:random}

Let $G$ be any finite impartial normal-play game, and fix $p\in(0,1]$.
At a nonterminal position $s$, the player chooses an option $t$. After
each attempted move is chosen, its success is determined by a fresh
Bernoulli($p$) random variable, independent of the preceding history and
of the current choice; thus, given the history and the choice, the attempt
succeeds with probability $p$. If it succeeds, the position becomes $t$;
otherwise it stays at $s$. In both cases the turn passes to the opponent.
At a terminal position the player to move loses immediately; no attempt
is made there.

Write $w_p(s)$ for the optimal probability that the player to move
at $s$ wins. The strategies may depend on the entire observed history.

\begin{lemma}\label{lem:bellman}
The randomized game terminates almost surely under every strategy
pair, has a value, and admits optimal deterministic stationary
strategies. Its value is zero at terminals and satisfies
\begin{equation}\label{eq:bellman}
 w_p(s)=\frac{1-p\min_{s\to t}w_p(t)}{2-p}
\end{equation}
at every nonterminal position. Moreover, an attempt $s\to t$ at a
nonterminal position $s$, followed by optimal play, gives the player to
move the winning probability
$w_p(s)-p\bigl(w_p(t)-\min_{s\to u}w_p(u)\bigr)$; in particular it is
optimal if and only if $w_p(t)=\min_{s\to u}w_p(u)$.
\end{lemma}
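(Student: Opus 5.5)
We argue by induction along the finite acyclic structure of $G$, processing positions in reverse topological order, so that for a nonterminal $s$ the values $w_p(t)$ of all options are already established when we treat $s$.

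\emph{Termination.} Every successful attempt strictly advances in the acyclic graph, and the longest path from the initial position has some finite length $N$. Each attempt succeeds with probability $p>0$, independently of the history, so every stretch of $N$ consecutive attempts contains a success with probability at least $p^N$, whatever the strategies. A geometric bound then gives almost sure termination, since the game stops after at most $N$ successes. In fact the number of turns has exponential tails uniformly over strategy pairs, which lets us treat the game as a terminating zero-sum stochastic game with finitely many states.

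\emph{Local equation at $s$.} Fix $s$ and suppose the options $t$ have values $w_p(t)$ with stationary optimal strategies. Consider the game started at $s$ and restricted to the phase before the first success. By history independence of the coin flips, the two players alternately attempt from $s$ until someone succeeds. Suppose the mover always attempts $t$ and the opponent, on each of its turns at $s$, attempts $u$.
\begin{itemize}
\item If the mover's attempt succeeds (probability $p$), the mover then wins with probability $1-w_p(t)$.
\item If it fails, the opponent faces $s$, and symmetrically the mover's winning probability becomes $1-v$, where $v$ is the opponent's value at $s$.
\end{itemize}
In a stationary self-consistent analysis both players face the same position $s$, so $v=w$. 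This gives
\[ w=p(1-\mu)+(1-p)(1-w), \]
where $\mu=\min_t w_p(t)$ is the value chosen by a minimizing mover. Solving yields $w=(1-p\mu)/(2-p)$, which is \eqref{eq:bellman}.

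To justify the equation rigorously, and not only for stationary play, we use a two-sided argument. For the lower bound, the mover uses the stationary strategy of always attempting some $t^*$ attaining $\mu$ at $s$, and playing optimally after any success. Against any opponent behaviour, the mover's win probability $W$ satisfies $W\ge p(1-\mu)+(1-p)(1-W')$. Here $W'$ is the opponent's probability from $s$ against our fixed stationary strategy, and by the same bound from the opponent's side $W'\le$ the corresponding quantity. The cleaner route is to note that from state $s$ the only continuations are ``success to $t$'' (a known subgame value) or ``back to $s$ with roles swapped''. The game at $s$ is therefore a finite recursive game whose one-step operator
\[ T(w)=\max_t\bigl[p(1-w_p(t))+(1-p)(1-w)\bigr] \]
is a contraction with modulus $1-p<1$, with the convention $\max_t(1-w_p(t))=1-\mu$. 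Standard discounted/terminating stochastic-game theory (Shapley) then gives existence of the value, its characterization as the unique fixed point, and optimality of the stationary strategies picking minimizers. Here history dependence is harmless because the history affects neither future transitions nor payoffs. For $p=1$ the equation reduces to $w=1-\mu$, the usual deterministic recursion.

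\emph{Deviation formula.} Suppose the mover attempts $t$ once and then plays optimally. The mover's winning probability is
\[ p(1-w_p(t))+(1-p)(1-w_p(s)). \]
Subtracting the optimal value $p(1-\mu)+(1-p)(1-w_p(s))=w_p(s)$ gives exactly $w_p(s)-p\,(w_p(t)-\mu)$. Since $p>0$, this equals $w_p(s)$ if and only if $w_p(t)=\mu$. Combined with the contraction argument, which shows that stationary use of a non-minimizing attempt is strictly worse, this yields the characterization of optimal attempts.

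The main obstacle is the rigorous existence of the value and the optimality of stationary strategies against history-dependent strategies. I would handle this with the Shapley contraction argument on the two-state-per-role structure, rather than through a general theorem for infinite-horizon games.
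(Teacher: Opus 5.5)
Your proposal is correct in outline, but it reaches the value and the stationary optimal strategies by a different route from the paper. You solve a local two-role game at $s$, in which a success is absorbing with payoff given by the option values, through the operator $T$, a contraction of modulus $1-p$. You then cite Shapley's stochastic-game theory for existence of the value and optimality of the stationary minimizing strategies.

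The paper uses no fixed-point theorem. It defines $w$ by the recursion on height and fixes the stationary strategy $\sigma^*$ that always attempts a minimizer. It then proves by induction that $\sigma^*$ guarantees $w(s)$ to the player to move, and $1-w(s)$ to the other player, against every randomized, history-dependent opponent. The key observation is that every attempt is a fresh Bernoulli($p$) trial with a common $p$. Hence the index $K$ of the first successful attempt at $s$ is geometric whatever the strategies, and the player to move makes the first success with probability exactly $1/(2-p)$. Both bounds then follow in one line, and $w=w_p$ follows from uniqueness of the value.

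What each route buys:
\begin{itemize}
\item The paper's argument is self-contained. It also makes explicit the step you compress into ``history dependence is harmless'': conditionally on the history at the first success, the opponent's continuation is simply some strategy in the game from that option, so the inductive guarantee there applies.
\item Your route outsources both this gluing step and the local existence result to Shapley. In exchange, it does not need $K$ to be independent of the strategies. It would therefore survive, for example, success probabilities that depend on the attempted move.
\end{itemize}

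Two small repairs are needed.
\begin{itemize}
\item In the termination step, a stretch of $N$ attempts is guaranteed to end the game only when all of them succeed, which happens with probability $p^N$. Alternatively, note that an i.i.d.\ Bernoulli($p$) sequence contains $N$ successes almost surely.
\item The unfinished ``two-sided argument'' should be deleted, since the contraction argument replaces it.
\end{itemize}

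Your one-shot deviation formula matches the paper's exactly. It gives the characterization of optimal attempts on its own, with no further appeal to the contraction.
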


\begin{proof}
Let $h(s)$ be the length of the longest path from $s$ in the underlying
acyclic graph. Each success moves the position along an edge, so after
at most $h(s)$ successes the position is terminal and play has ended.
By the model, the successive success indicators are independent
Bernoulli($p$) variables (if play ends, continue the sequence with fresh
ones), and such a sequence contains infinitely many successes almost
surely because $p>0$. Hence play terminates almost surely under every
pair of strategies, which may be randomized and may depend on the whole
history. In particular, the winning probabilities of the two players
always add up to $1$.

Define $w$ on positions recursively on the height: $w(s)=0$ at terminal
positions, and at a nonterminal position
\[
 w(s)=\frac{1-p\,m(s)}{2-p},\qquad m(s)=\min_{s\to t}w(t),
\]
which makes sense because every option of $s$ has smaller height. For
each nonterminal position $u$ fix an option $t^*(u)$ with
$w(t^*(u))=m(u)$, and let $\sigma^*$ be the deterministic stationary
strategy that attempts $t^*(u)$ whenever the position is $u$. We prove by
induction on $h(s)$ the following claim for the game started at $s$,
against every strategy of the opponent, which may be randomized and may
depend on the whole history: if a player uses $\sigma^*$, then he wins
with probability at least $w(s)$ when he is the player to move at $s$, and
with probability at least $1-w(s)$ when he is not.

If $s$ is terminal, the player to move loses at once and $w(s)=0$, so the
claim holds. Let $s$ be nonterminal, assume the claim at all options of
$s$, and write $m=m(s)$ and $t^*=t^*(s)$. Call the player to move at $s$
Player~1 and the other player Player~2. While the position remains $s$,
the attempts alternate, Player~1 making the odd-numbered ones. Let $K$ be
the number of the first successful attempt. Each attempt at $s$ is a fresh
Bernoulli($p$) trial, whatever the history and the choice made, so $K$ is
geometric with parameter $p$ whatever the strategies are:
\[
 \mathbb P(K\text{ odd})=\sum_{k\ \mathrm{odd}}p(1-p)^{k-1}=\frac1{2-p},
 \qquad
 \mathbb P(K\text{ even})=\frac{1-p}{2-p}.
\]
After the first success the position is an option of $s$. Because
$\sigma^*$ is stationary, the continuation of a player who uses it is
$\sigma^*$ in the game started at the new position, and the opponent's
conduct, conditioned on the history so far, is some strategy of that game.
This is why the claim at the options of $s$ can be applied.

\emph{Player~1 uses $\sigma^*$.} If $K$ is odd, Player~1 has moved to
$t^*$ and Player~2 is to move there, so by the claim at $t^*$ Player~1
wins with probability at least $1-w(t^*)=1-m$. If $K$ is even, Player~2
has moved to some option $t$ and Player~1 is to move there, so Player~1
wins with probability at least $w(t)\geq m$. Hence he wins with
probability at least
\[
 \frac{1-m}{2-p}+\frac{(1-p)\,m}{2-p}=\frac{1-pm}{2-p}=w(s).
\]

\emph{Player~2 uses $\sigma^*$.} If $K$ is odd, Player~1 has moved to some
option $t$ and Player~2 is to move there, so Player~2 wins with
probability at least $w(t)\geq m$, and Player~1 wins with probability at
most $1-m$. If $K$ is even, Player~2 has moved to $t^*$ and Player~1 is to
move there, so by the claim at $t^*$ Player~2 wins with probability at
least $1-w(t^*)=1-m$, and Player~1 wins with probability at most $m$.
Hence Player~1 wins with probability at most
\[
 \frac{1-m}{2-p}+\frac{(1-p)\,m}{2-p}=w(s),
\]
that is, Player~2 wins with probability at least $1-w(s)$. This completes
the induction.

Consequently the game started at $s$ has a value, which equals $w(s)$,
and the strategy $\sigma^*$ is optimal for both players. The value of a
game is unique, so $w=w_p$, and $w_p$ satisfies \eqref{eq:bellman} by
construction. The value is zero at terminal positions, and the optimal
strategies $\sigma^*$ are deterministic and stationary.

Finally, suppose the player to move at $s$ attempts an option $t$ and
plays optimally afterwards. With probability $p$ the attempt succeeds and
the opponent is to move at $t$, and with probability $1-p$ it fails and
the opponent is to move at $s$ again. So the attempt gives the winning
probability
\[
 p\bigl(1-w_p(t)\bigr)+(1-p)\bigl(1-w_p(s)\bigr)
 =w_p(s)-p\bigl(w_p(t)-m\bigr),
\]
by \eqref{eq:bellman}. Since $p>0$ and $w_p(t)\geq m$, this equals
$w_p(s)$ if and only if $w_p(t)=m$. Thus the optimal attempts at $s$ are
exactly the attempts at options that minimize $w_p$.
\end{proof}

\begin{theorem}\label{thm:random}
Let $\lambda=p/(2-p)$. At every position of $G$,
\begin{equation}\label{eq:random}
 \boxed{\displaystyle
 w_p(s)=\frac{1-(-\lambda)^{\R(s)}}2
       =\frac12+\frac{(-1)^{\R(s)+1}}2\lambda^{\R(s)}.}
\end{equation}
For $0<p<1$, an attempted move is optimal if and only if it decreases
$\R$ by one. Hence the sets of optimal attempts are independent of
$p$ throughout this interval.
\end{theorem}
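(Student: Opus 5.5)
Set $f(s)=\bigl(1-(-\lambda)^{\R(s)}\bigr)/2$. I will show $f=w_p$ by induction on the height $h(s)$, using the recursion \eqref{eq:bellman} from Lemma~\ref{lem:bellman}, which determines $w_p$ uniquely. At a terminal position $\R=0$, so $f=0=w_p$. At a nonterminal $s$, assume $f=w_p$ at all options. The key observation is that $g(r)=\bigl(1-(-\lambda)^r\bigr)/2$ orders the naturals in the same way the remoteness recursion prefers options. For even $r$, $g(r)=(1-\lambda^r)/2<1/2$, and it is increasing in $r$ because $0<\lambda\le1$. For odd $r$, $g(r)=(1+\lambda^r)/2>1/2$, and it is decreasing in $r$. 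When $p=1$ we have $\lambda=1$, so $g(r)\in\{0,1\}$ and the ties are harmless for the value. Consequently the minimum of $w_p$ over the options of $s$ is attained at the least even remoteness if an even option exists, and otherwise at the largest odd remoteness. This is exactly the option selected in \eqref{eq:remote}, so $\min_{s\to t}w_p(t)=g(\R(s)-1)$.

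It then remains to check the identity $g(r+1)=\bigl(1-p\,g(r)\bigr)/(2-p)$. Expanding the right side gives
\[
\frac{1-\tfrac p2+\tfrac p2(-\lambda)^r}{2-p}=\frac12+\frac{\lambda}{2}(-\lambda)^r=\frac{1-(-\lambda)^{r+1}}2,
\]
which is a routine computation. Plugging in $r=\R(s)-1$ gives $w_p(s)=g(\R(s))$ and completes the induction. The second expression in \eqref{eq:random} is just a rewriting of the first.

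For the optimal moves with $0<p<1$ we have $0<\lambda<1$. Then $g$ is injective on $\Nzero$: the even values lie below $1/2$ and are strictly increasing, and the odd values lie above $1/2$ and are strictly decreasing. By the last part of Lemma~\ref{lem:bellman}, an attempt $s\to t$ is optimal iff $w_p(t)=\min_{s\to u}w_p(u)=g(\R(s)-1)$. By injectivity this holds iff $\R(t)=\R(s)-1$, so the set of optimal attempts does not depend on $p$.

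The main obstacle is the monotonicity step. I must verify that the minimizing option under $g$ coincides with the remoteness-optimal option in both branches of \eqref{eq:remote}, including the mixed case where both even and odd options exist. There the even options beat every odd one, since $g(\text{even})<1/2<g(\text{odd})$. This is also the step where the restriction $p<1$ is needed for strictness.
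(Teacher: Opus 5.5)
Your proposal is correct and follows essentially the same route as the paper. Both argue by induction on height through the recursion of Lemma~\ref{lem:bellman}, use the parity and monotonicity ordering of the values to identify the minimizing option with the one chosen in \eqref{eq:remote}, and then invoke that lemma's last assertion for the optimal attempts. The only differences are cosmetic. The paper centers $u_p=w_p-\tfrac12$, so that the recursion reads $u_p(s)=-\lambda\min_{s\to t}u_p(t)$ and your affine identity becomes trivial. The paper also treats $p=1$ separately as the deterministic P/N case, whereas you include it in the same induction.
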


\begin{proof}
Put $u_p=w_p-1/2$. Equation~\eqref{eq:bellman} becomes
\begin{equation}\label{eq:centered}
 u_p(s)=-\lambda\min_{s\to t}u_p(t),\qquad
 u_p(s)=-\tfrac12\text{ at terminals}.
\end{equation}
Suppose first $0<p<1$, so $0<\lambda<1$. Induct on the height of
$s$. The terminal case follows from $\R(s)=0$. If some option has
even remoteness, the negative successor values are exactly those of
even remoteness. Among them the least is
$-\lambda^r/2$, where $r$ is the smallest even successor remoteness.
Thus \eqref{eq:centered} gives $u_p(s)=\lambda^{r+1}/2$, and
$\R(s)=r+1$ by \eqref{eq:remote}.

If all options have odd remoteness, all their centered values are
positive. The least is $\lambda^r/2$, where $r$ is the largest
successor remoteness. Equation~\eqref{eq:centered} gives
$u_p(s)=-\lambda^{r+1}/2$, and again $\R(s)=r+1$.
This proves the formula. The strict monotonicity of the powers of
$\lambda$ shows in both cases that the minimizing options are
exactly those of remoteness $\R(s)-1$. Lemma~\ref{lem:bellman}
then proves the assertion about optimal attempts.

For $p=1$, the game is deterministic. Formula~\eqref{eq:random}
reduces to $w_1(s)=0$ at even remoteness and $w_1(s)=1$ at odd
remoteness, which is the P/N classification.
\end{proof}

\Needspace{9\baselineskip}
\begin{corollary}\label{cor:invariance}
For every $p\in(0,1]$, the value is less than $1/2$ precisely at
the deterministic P-positions and greater than $1/2$ precisely at
the deterministic N-positions. In particular no position has value
exactly $1/2$. For fixed $0<p<1$, remoteness can be recovered from
the value by
\[
 \R(s)=\frac{\log|2w_p(s)-1|}{\log(p/(2-p))}.
\]
\end{corollary}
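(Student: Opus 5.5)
The plan is to read everything off the closed formula \eqref{eq:random} of Theorem~\ref{thm:random}, using only the parity characterization of P- and N-positions that follows Definition~\ref{def:remote}. Write $\lambda=p/(2-p)$. For $p\in(0,1]$ we have $0<\lambda\le 1$, and $\lambda=1$ exactly when $p=1$. By \eqref{eq:random},
\[
 2w_p(s)-1=(-1)^{\R(s)+1}\lambda^{\R(s)} .
\]

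The sign claims come first. Since $\lambda>0$, the factor $\lambda^{\R(s)}$ is strictly positive, so the sign of $2w_p(s)-1$ is $(-1)^{\R(s)+1}$. This sign is negative exactly when $\R(s)$ is even, that is, at P-positions, and positive exactly when $\R(s)$ is odd, that is, at N-positions. In particular $w_p(s)$ is never equal to $1/2$. This step covers the whole range $p\in(0,1]$, including $p=1$, where the formula gives the values $0$ and $1$ as noted at the end of the proof of Theorem~\ref{thm:random}.

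For the recovery formula, fix $0<p<1$, so that $0<\lambda<1$ and $\log\lambda<0$. Taking absolute values gives $|2w_p(s)-1|=\lambda^{\R(s)}>0$, so its logarithm is defined and equals $\R(s)\log\lambda$. Dividing by $\log\lambda\neq0$ yields the displayed expression for $\R(s)$. The restriction $p<1$ is needed because at $p=1$ the denominator $\log 1$ vanishes. The proof has no real obstacle; the only care required is to note where $\lambda\ne 1$ is used, namely in the recovery formula and not in the sign statements.
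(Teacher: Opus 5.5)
Your proposal is correct and follows the paper's own proof. You read the sign of $2w_p(s)-1$ off \eqref{eq:random} using $\lambda>0$ and the parity characterization of P-positions, and you obtain the recovery formula by taking logarithms of $|2w_p(s)-1|=\lambda^{\R(s)}$, where $\log\lambda\neq0$ for $0<p<1$.
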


\begin{proof}
Use \eqref{eq:random} and $\lambda>0$; for the second assertion take
logarithms of $|2w_p(s)-1|=\lambda^{\R(s)}$.
\end{proof}

\begin{proposition}[Duration under optimal play]\label{prop:duration}
Suppose $0<p<1$ and both players use optimal stationary strategies.
Starting from $s$ with $r=\R(s)>0$, the number $T$ of attempted moves
has distribution
\begin{equation}\label{eq:duration}
 \mathbb P(T=n)=\binom{n-1}{r-1}p^r(1-p)^{n-r},\qquad n\geq r.
\end{equation}
In particular,
\[
 \mathbb E[T]=\frac rp,\qquad
 \Var(T)=\frac{r(1-p)}{p^2}.
\]
At a terminal position, $T=0$.
\end{proposition}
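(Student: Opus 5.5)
The plan is to show that, under optimal stationary play, every successful attempt lowers the remoteness by exactly one. Then $T$ is the number of Bernoulli($p$) trials needed to collect $r$ successes, which is negative binomial.

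First, fix optimal stationary strategies for both players. By Lemma~\ref{lem:bellman}, at every nonterminal position $u$ each player attempts only options $t$ with $w_p(t)=\min_{u\to v}w_p(v)$. Since $0<p<1$, Theorem~\ref{thm:random} says these are exactly the options with $\R(t)=\R(u)-1$. The stationary strategies are optimal for both players from every position: Lemma~\ref{lem:bellman} characterizes optimality of an attempt locally at each position, so an optimal stationary strategy must choose such an option at every position. I would state this explicitly, because optimality only from $s$ would constrain behaviour only at positions reachable with positive probability. Every position visited is in fact reachable, so that restriction is harmless anyway. Consequently, along any play, the remoteness starts at $r$ and is unchanged by a failure. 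Each success lowers it by exactly one, whichever player attempts. The position is terminal exactly when the remoteness is $0$, and no nonterminal position has remoteness $0$ (by \eqref{eq:remote}). So play ends precisely at the $r$-th success.

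Second, identify the law of $T$. By the model in Section~\ref{sec:random}, the success indicators $\xi_1,\xi_2,\dots$ of the successive attempts are i.i.d.\ Bernoulli($p$), extended by fresh variables after termination as in the proof of Lemma~\ref{lem:bellman}. This holds regardless of the choices made. By the first step,
\[
 T=\min\{n:\xi_1+\cdots+\xi_n=r\}.
\]
This is finite almost surely, and the event $\{T=n\}$ means the $n$-th trial succeeds and exactly $r-1$ of the first $n-1$ succeed. That event has probability $\binom{n-1}{r-1}p^{r}(1-p)^{n-r}$, which gives \eqref{eq:duration}.

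Third, for the moments, write $T=G_1+\cdots+G_r$, where $G_i$ is the number of trials strictly after the $(i-1)$-st success up to and including the $i$-th. The $G_i$ are i.i.d.\ geometric with parameter $p$, with mean $1/p$ and variance $(1-p)/p^2$; additivity gives $\mathbb E[T]=r/p$ and $\Var(T)=r(1-p)/p^2$. At a terminal position no attempt is made, so $T=0$.

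The only delicate point is the first step, namely that optimality forces every successful attempt, by either player, to reduce $\R$ by exactly one. The losing player's optimal moves also reduce remoteness by exactly one; this is covered because Theorem~\ref{thm:random} characterizes optimal attempts at all positions, N and P alike, and it requires $p<1$. The rest is the standard negative binomial computation.
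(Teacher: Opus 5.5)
Your proposal is correct and follows the paper's proof. Optimal attempts are exactly the remoteness-decreasing ones by Lemma~\ref{lem:bellman} and Theorem~\ref{thm:random}, using $p<1$, and failures leave $\R$ unchanged. Hence $T$ is the waiting time for the $r$th success, which is negative binomial, and the moments come from a sum of $r$ independent geometric variables.

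Your aside about strategies that are optimal only from $s$ is asserted rather than proved; it would need the standard on-path argument for pairs of optimal strategies. It is not needed under the reading you adopt, which is also the one the paper uses implicitly.
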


\begin{proof}
Every successful optimal attempt decreases remoteness by one, while
a failed attempt leaves it unchanged. The game therefore ends on
the $r$th success. This is the negative binomial waiting-time
distribution \eqref{eq:duration}, or equivalently a sum of $r$
independent geometric waiting times, each with mean $1/p$ and
variance $(1-p)/p^2$.
\end{proof}

This gives another interpretation of \eqref{eq:random}. The player
initially to move wins exactly when $T$ is odd, and
\[
 \mathbb E[(-1)^T]=\left(-\frac{p}{2-p}\right)^r.
\]
Thus the probability of winning is
$(1-\mathbb E[(-1)^T])/2$. Theorem~\ref{thm:random} establishes
the optimality needed for this interpretation.

For $p=1$, remoteness-optimal play is still optimal, but other
value-optimal moves can have different durations. Also, $p=0$ is
excluded: starting from a nonterminal position, play would never
terminate. Although $w_p(s)\to1/2$ as $p\downarrow0$ for each
nonterminal $s$, this limit does not define a normal-play value at
$p=0$.

Combining Theorems~\ref{thm:main} and \ref{thm:random}, let
\[
 k=\min\!\left\{\idx(x),\left\lfloor\frac{y-x}{a}\right\rfloor\right\}.
\]
Then randomized $\WYT(a,b)$ has the explicit expression in P-position
coordinates
\begin{equation}\label{eq:random-wyt}
 w_p(x,y)=\frac{1-(-\lambda)^{2k+\eps(x,y)}}2.
\end{equation}
The moves in Corollary~\ref{cor:moves} are optimal for every
$p\in(0,1]$. For example, $\R(4,9)=7$ in classical Wythoff. At
$p=1/2$ its value is $1094/2187$, and its mean duration under optimal
play is $14$ attempted moves.

\section{Reproducible checks}\label{sec:checks}

The proofs above do not depend on computation. The supplementary material contains the Python program \texttt{verify\_remoteness.py}, which
checks the formulas and the stated move constructions independently.
It generates P-positions from \eqref{eq:pairs}, but computes remoteness
separately by enumerating legal options and applying \eqref{eq:remote}.
It also checks that the even values of this independent recursion
agree with \eqref{eq:p-test}.

The parameter grid is $1\leq a,b\leq8$ with
$0\leq x\leq y\leq70$. Fourteen parameter pairs are additionally
checked up to $y=170$:
\[
\begin{gathered}
 (1,1),(2,1),(1,2),(2,2),(1,3),(3,2),(2,3),\\
 (1,4),(4,4),(3,3),(5,2),(2,5),(4,1),(1,5).
\end{gathered}
\]
Every nonterminal position is also checked for legality of the
constructed move and a decrease of remoteness by exactly one.
There are $333684$ distinct parameter-position cases across these
runs, with no discrepancies in the formula or the move construction.

For the stochastic identity, the program enumerates all $2^{15}=32768$
directed graphs on vertices $0,\ldots,5$ whose edges point to smaller
vertices. This includes, up to a compatible labeling, every acyclic
game graph on at most six vertices. It evaluates the Bellman
recurrence using exact rational arithmetic at
$p\in\{1/5,1/2,4/5,1\}$ and compares with
Theorem~\ref{thm:random}. For $p<1$, it also compares the complete
sets of optimal attempts with the remoteness-decreasing options.
All $786432$ state-probability evaluations agree with the formula,
and all optimal-option comparisons agree.

\section{Concluding remarks}\label{sec:conclusion}

The two-index formula identifies a fastest winning move in generalized Wythoff games directly from the P-position coordinates and the pile difference. The failed-move model gives remoteness a probabilistic interpretation that applies to every finite impartial normal-play game. For a common success probability strictly between zero and one, remoteness determines the optimal attempts, the winning probability, and the duration distribution under optimal stationary play.

\section*{Statements and Declarations}

\paragraph{Conflict of interest}
The authors have no conflicts to disclose.

\paragraph{Data and code availability.}
The verification program \texttt{verify\_remoteness.py} is provided as supplementary material.

\paragraph{Use of generative AI.}
ChatGPT was used as an exploratory and editorial aid, including for
checking the mathematical arguments and for writing the supplementary
verification code. The authors take full responsibility for the
content of the article and the code.

\end{document}